%
%
%
%
%
%
\RequirePackage{fix-cm}
\documentclass[fleqn,reqno,smallextended]{svjour3}       
\smartqed  
\usepackage{graphicx}
\usepackage{amsfonts,amssymb,amscd,amsmath,enumerate,verbatim,calc}
\usepackage[colorlinks,citecolor=blue]{hyperref}
\usepackage{cite}

\spnewtheorem{result}{Result}{\bf}{\it}

\DeclareMathOperator{\grad}{grad}

\DeclareMathOperator*{\Ric}{Ric}
\DeclareMathOperator*{\diver}{div}

\newcommand{\co}{\nabla}
\newcommand{\D}{\partial}
\newcommand{\di}{\mathcal{D}}
\newcommand{\n}{{M_1 }\times_{f}M_2}

%
%
%
%
%
\begin{document}

\title{On cosmological constant of Generalized Robertson-Walker space-times 
}


\author{Morteza Faghfouri \and Ali Haji-Badali\and Fatemeh Gholami
}

\authorrunning{M. Faghfouri \and    A. Haji Badali   \and F. Gholami
} 
\institute{Morteza Faghfouri \at Faculty of Mathematics, University of Tabriz, Tabriz, Iran\\              \email{faghfouri@tabrizu.ac.ir}           \and        Ali Haji Badali  \at    Department of Mathematics, Faculty of Basic Sciences,University of Bonab, Bonab, Iran\\               \email{haji.badali@bonabu.ac.ir}
\and           Fatemeh Gholami \at            Department of Mathematics, Faculty of Basic Sciences,   University of Bonab, Bonab, Iran\\               \email{fateme.gholami@bonabu.ac.ir}
}

\maketitle

\begin{abstract}
We study Einstein's
equation in $(m+n)D$  and $(1+n)D$ warped spaces $(\bar{M},\bar{g})$
 and classify all such spaces satisfying
Einstein equations $\bar{G}=-\bar{\Lambda}\bar{g}$. We show that the
warping function 
 not only can determine the cosmological
constant $\bar{\Lambda}$ but also it can determine the cosmological
constant ${\Lambda}$ appearing in the induced Einstein equations
${G}=-{\Lambda}{h}$ on $(M_2, h)$. Moreover, we discuss on the
origin of the $4D$ cosmological constant as an emergent effect of
higher dimensional warped spaces.

\keywords{Einstein  equation \and cosmological constant \and generalized Robertson-Walker spacetime \and Lorentzian warped product}
 \subclass{Primary 83F05\and 35Q76\and Secondary 53C25}
\end{abstract}

\section{Introduction}
One of the most fruitful generalizations of the direct product of two pseudo-Riemannian manifolds is the warped product defined by  Bishop and O'Neill in Ref.\cite{bishop.oneill:}. The notion of warped products plays very important
roles in differential geometry as well as in mathematical physics, especially
in general relativity.

 Many basic solutions of the Einstein field equations are warped products.
For instance, both Schwarzschild and Robertson-Walker models
in general relativity are warped products. Schwarzschild spacetime is the best relativistic model which describes the outer spacetime around a massive star or a black hole and the Robertson-Walker model describes a simply connected
homogeneous isotropic expanding or contracting universe.

 In Lorentzian geometry, it was first noticed that some well known solutions
to Einstein's field equations can be expressed in terms of warped
products \cite{Beem.Ehrlich:GlobalLorentzianGeometry} and afterwards
Lorentzian warped products have been used to obtain more solutions
to Einstein's field equations. On the other hand, first attempts for
finding a static solution of the field equations of general
relativity, applied for the cosmology, was done by Einstein who
introduced the cosmological constant \cite{Einstein}. Since then the
cosmological constant has played a very important role in the
context of cosmology. However, the problem of origin and the large
disagreement between the theoretical  prediction, based on the
quantum field theory calculations, and the observational bound on
the value of this cosmological term has remained as the well known
``cosmological constant problem'' \cite{CCP,Darabi:AquantumCosmologyAndDiscontinuous}. Here, we do not
propose a solution for this problem, but we aim to present a new
origin for the cosmological constant, based on the warped product of
two pseudo-Riemannian manifolds. In this work, we study the
generalized Robertson-Walker warped spacetimes with a cosmological
constant.

Generalized Robertson-Walker spacetime  models
 and standard static spacetime models are two well known solutions to Einstein's
field equations which can be expressed as Lorentzian warped products.
 Bejancu {\it et al} in Ref.\cite{Bejancu:Classificationof5dwarpedspaceswithcosmologicalconstant}, by using the extrinsic curvature of the horizontal distribution, obtained  the classification of all spaces $(\bar{M},\bar{g})$ satisfying Einstein equations $\bar{G}=-\bar{\Lambda}\bar{g},$ where  $(\bar{M},\bar{g})$ is a $5D$ warped space defined by $4D$ spacetime $(M,g)$ and the warped function. They described  all the exact solutions for the warped metric by means of $4D$ exact solutions.
We generalize the formalism of Bejancu {\it et al} in $5D$ warped
space to $(m+n)D$  and $(1+n)D$ warped spaces. The main motivation
of this work is to describe the cosmological constant in Einstein's
equation as a resultant of the geometry of warped spaces.

In  section 2, we recall some necessary details  on warped product of pseudo-Riemannian manifolds, Ricci tensor,  scalar  curvature and  Einstein gravitational equation  on warped space. In section 3, we find necessary and sufficient conditions for the validity of Einstein equations on warped product space. Finally, in section 4, we construct five classes of exact solutions of Einstein equation for generalized Robertson-Walker spacetime.

\section{Preliminaries}
Let $\psi$ be a smooth  function on a pseudo-Riemannian $n$-manifold $(M,g)$. Then  the Hessian tensor field of $\psi$ is given by
$H^\psi(X,Y)=XY\psi-(\co_XY)\psi,$
and
the Laplacian of $\psi$ is given by
$\Delta\psi=\text{trace }(H^\psi),$ or equivalently, $\Delta = \diver(\grad),$  where $\co$, $\diver$ and $\grad$ are  Levi-Civita
connection of $M$,  the divergence
and  the gradient operators, respectively.  (see p. 85 of \cite{oneil:book}).  Furthermore, we will frequently use the notation
$\Vert\grad f\Vert^2 = g(\grad f, \grad f ).$
In terms of a coordinate system $(x^1,\ldots,x^n)$, we have
\begin{align}
\Delta\psi=g^{ij}\left(\dfrac{\D^2 \psi}{\D x^i\D x^j}-\Gamma_{ij}^k\dfrac{\D \psi}{\D x^k}\right),
\end{align}
where $\Gamma _{ij}^k$ are the Christoffel symbols on $(M,g)$ given by
$\Gamma_{ij}^k=\frac{1}{2}g^{kl}\left(\frac{\D g_{il}}{\D x^j}+\frac{\D g_{jl}}{\D x^i}-\frac{\D g_{ij}}{\D x^l}\right).$

We state the following  Lemmas for later uses.
\begin{lemma}[Ref. \cite{chen:2011pseudo}]\label{lemma:hopf1}
Every harmonic function on a compact Riemannian manifold
is  constant.
\end{lemma}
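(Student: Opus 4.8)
The plan is to establish the result by an integration-by-parts (Green's identity) argument, exploiting the compactness of $M$ to discard boundary contributions and the positive-definiteness of the Riemannian metric to force the gradient to vanish. Let $f$ be harmonic, so that $\Delta f = 0$. First I would record the pointwise product rule for the divergence,
\begin{align}
\diver(f\,\grad f) = f\,\Delta f + \Vert \grad f\Vert^2,
\end{align}
which follows from $\Delta = \diver(\grad)$ together with the Leibniz property of $\diver$ applied to the scalar multiple $f\,\grad f$.

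Next I would integrate this identity over $M$. Since $M$ is compact and without boundary, the divergence theorem gives $\int_M \diver(f\,\grad f)\,dV = 0$. Combining this with the harmonicity $\Delta f = 0$ collapses the right-hand side to
\begin{align}
\int_M \Vert \grad f\Vert^2\,dV = 0.
\end{align}
The final step is the decisive use of the Riemannian hypothesis: because $g$ is positive-definite, the integrand $\Vert \grad f\Vert^2 = g(\grad f,\grad f)$ is nonnegative, so a vanishing integral forces $\Vert \grad f\Vert^2 \equiv 0$, hence $\grad f \equiv 0$. Thus $f$ is locally constant, and on the (connected) manifold $M$ it is therefore constant.

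I expect the main subtlety to be twofold. First, one must justify the vanishing of $\int_M \diver(f\,\grad f)\,dV$; on a closed manifold this is the standard divergence theorem, but it is worth flagging that the absence of boundary is precisely what removes the surface term. Second --- and this is where the statement genuinely requires \emph{Riemannian} rather than merely pseudo-Riemannian --- the passage from $\int_M \Vert \grad f\Vert^2\,dV = 0$ to $\grad f = 0$ rests on $g$ being positive-definite; for an indefinite metric the quadratic form $g(\grad f,\grad f)$ need not be sign-definite and the argument collapses. An alternative route avoiding integration is Hopf's strong maximum principle: a harmonic function on a compact manifold attains its maximum, and the strong maximum principle then forces it to be locally constant and hence, by connectedness, constant everywhere. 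I would keep the integral argument as the primary one, since it is the more self-contained of the two.
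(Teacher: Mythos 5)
Your argument is correct and is the standard divergence-theorem proof of this fact; the paper itself offers no proof, merely citing the lemma from Chen's book, where essentially this same integration-by-parts argument appears. Your two flagged caveats (no boundary term on a closed manifold, and positive-definiteness of $g$ to pass from $\int_M \Vert\grad f\Vert^2\,dV=0$ to $\grad f\equiv 0$, plus connectedness to go from locally constant to constant) are exactly the right ones.
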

\begin{lemma}[Hopf's lemma. in Ref. \cite{chen:2011pseudo} ]\label{lemma:hopf2}
Let $M$ be a compact Riemannian manifold. If $\psi$ is a
differentiable function on $M$ such that $\Delta\psi \geq0$  everywhere on $M$ (or $\Delta\psi \leq0$
everywhere on $M$), then $\psi$ is a constant function.
\end{lemma}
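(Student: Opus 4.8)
The plan is to reduce the statement to the harmonic case already handled in Lemma~\ref{lemma:hopf1}. The crucial observation is that on a compact Riemannian manifold without boundary the integral of a Laplacian vanishes, so a one-sided sign condition on $\Delta\psi$ can be compatible only with $\Delta\psi$ being identically zero. Once we know $\psi$ is harmonic, Lemma~\ref{lemma:hopf1} finishes the argument immediately.

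First I would recall from the Preliminaries that $\Delta\psi = \diver(\grad\psi)$. Applying the divergence theorem to the vector field $\grad\psi$ on the closed manifold $M$ gives
\begin{align}
\int_M \Delta\psi\, dV = \int_M \diver(\grad\psi)\, dV = 0,
\end{align}
because $\partial M = \emptyset$. Next, suppose $\Delta\psi \geq 0$ everywhere (the case $\Delta\psi \leq 0$ is identical after replacing $\psi$ by $-\psi$). Then $\Delta\psi$ is a non-negative continuous function on $M$ whose integral vanishes, hence $\Delta\psi \equiv 0$ on $M$; that is, $\psi$ is harmonic. Finally, applying Lemma~\ref{lemma:hopf1} to the harmonic function $\psi$ on the compact manifold $M$ yields that $\psi$ is constant, which completes the proof.

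The only delicate point is the vanishing of $\int_M \Delta\psi\, dV$, which relies on $M$ being compact and, in particular, boundaryless, so that the boundary term in the divergence theorem is absent; after that, the remaining steps are routine. Should one prefer an argument that does not invoke Lemma~\ref{lemma:hopf1}, the same conclusion follows from $\Delta\psi \equiv 0$ by integrating the identity $\diver(\psi\,\grad\psi) = \psi\,\Delta\psi + \Vert\grad\psi\Vert^2$ over $M$, which gives $\int_M \Vert\grad\psi\Vert^2\, dV = 0$, whence $\grad\psi = 0$ and $\psi$ is constant.
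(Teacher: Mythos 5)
Your proof is correct, and it is the standard argument: on a closed manifold $\int_M \Delta\psi\, dV = \int_M \diver(\grad\psi)\, dV = 0$, so a one-signed continuous $\Delta\psi$ must vanish identically, after which either Lemma~\ref{lemma:hopf1} or the identity $\diver(\psi\,\grad\psi) = \psi\,\Delta\psi + \Vert\grad\psi\Vert^2$ forces $\grad\psi = 0$. Note, however, that the paper does not prove this lemma at all --- it is stated as a known result imported from Ref.~\cite{chen:2011pseudo} --- so there is no in-paper argument to compare against; your reconstruction matches the classical proof found in that reference. The only points worth making explicit are that the sign-plus-vanishing-integral step uses continuity of $\Delta\psi$ (so $\psi$ should be taken $C^2$, which is the intended reading of ``differentiable'' here), and that ``compact'' is being read as ``closed'' so that no boundary term appears in the divergence theorem.
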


Let $(M_1,g)$ and $(M_2,h)$ are two pseudo-Riemannian manifolds and $f$
be a positive smooth function on $ M_1$. Then the warped product
$\bar{M}=M_1\times_f M_2$ is the product manifold $M_1\times M_2$
endowed with the pseudo-Riemannian metric $\bar{g} = \pi^*g +
(f\circ\pi)^2\sigma^*h,$
where $\pi$ and $\sigma$ are the projections of $M_1\times M_2$ onto
$M_1$ and $M_2$ respectively. The function $f$ is called a warping function and also $(M_1, g)$ and $(M_2,h)$ is called  a base
manifold and a fiber manifold, respectively\cite{oneil:book}.

The  warped product $(\bar{M}, \bar{g})$ is a Lorentzian  warped product if $(M_2,h)$
are Riemannian and either $(M_1,g)$ is Lorentzian or else
$(M_1,g)$ is a one-dimensional manifold with a negative definite metric $-dt^2$, Ref.
\cite{Beem.Ehrlich.Easley:GlobalLorentzianGeometry}.
We recall the de sitter space  with cosmological constant $\Lambda>0$, and
\begin{align}
ds^2=-dt^2+e^{2(\frac{\Lambda}{3})^{\frac{1}{2}}t}\left(dr^2+r^2(d\theta^2+\sin^2\theta d\phi^2)\right),
\end{align}
where $(r,\theta,\phi)$ are spherical coordinates\cite{Misner.thorne:gravitation}. The de sitter space is an example of the Lorentzian warped product.

  One of the main properties of
$\bar{M}$ is that the two factors $(M_1,g)$ and $(M_2,h)$ are
orthogonal with respect to $\bar{g}$.
 For a vector field $X$ on $M_1$, the lift of $X$  to $\n$ is the vector field $\tilde{X}$ whose value at each $(p,q)$ is the lift
 $X_p$ to $(p,q)$. Thus the lift of $X$ is the unique vector field on $\n$ that is $\pi_1$-related to $X$ and $\pi_2$-related to
 the zero vector field on $M_2$.
For a warped product $\n$, let $\di_i$ denotes the distribution obtained from the vectors tangent to the horizontal lifts of
$M_i$.

Let $\bar{M}=({M_1 }\times_{f}M_2,\bar{g})$  be a warped product of pseudo-Riemannian manifolds  $(M_1,g)$ and $(M_2,h)$ with
metric $\bar{g}=g\times_f h$. If $X, Y, Z\in\di_1$ and $V, W, U\in\di_2$, then
\begin{align}
&\bar{\co}_XY=\co^1_XY,\\
&\bar{\co}_XV=\bar{\co}_VX=\frac{X(f)}{f}V,\\
&\bar{\co}_VW=\co^2_VW-\frac{g(V,W)}{f}\grad f,
\end{align}
where $\bar{\co}, \co^1$ and $ \co^2 $ are   the Levi-Civita connection of metrics $\bar{g}, g$ and $h,$ respectively.
\begin{align}
&\bar{R}(X,Y)Z=R^1(X,Y)Z.\\
&\bar{R}(V,X)Y=-\frac{H^f_1(X,Y)}{f}V, \text{ where } H^f_1 \text{ is the Hessian of } f.\\
&\bar{R}(X,Y)W=\bar{R}(V,W)X=0.\\
&\bar{R}(X,V)W=g(V,W)/f\co^1_X(\grad f).\\
&\bar{R}(V,W)U=R^2(V,W)U-\Vert\grad f\Vert^2/f^2(g(V,U)W-g(W,U)V),
\end{align}
where $\bar{R}, R^1$ and $ R^2 $ are   the curvature tensor  of metrics  $\bar{g}, g$ and $h,$ respectively.
\begin{align}
&\bar{\Ric}(X,Y)=\Ric^{M_1}(X,Y)-\frac{n}{f}H^f_{M_1}(X,Y).\\
&\bar{\Ric}(X,V)=0.\\
&\bar{\Ric}(V,W)=\Ric^{M_2}(V,W)-g(V,W) \left(\frac{\Delta
f}{f}+(n-1)\frac{\|\grad f\|^2}{f^2}\right),
\end{align}
where $\Ric, \Ric^{M_1}$ and $ \Ric^{M_2} $ are   the Ricci tensor  of metrics  $\bar{g}, g$ and $h,$ respectively.
\begin{align}
\bar{S}=S^{M_1}+\frac{S^{M_2}}{f^2}- 2n\frac{\Delta
 f}{f}-n(n-1)\frac{\|\grad f\|^2}{f^2},\label{eq:Scalar}
 \end{align}
where $S^{M_1}$ is scalar curvature of $(M_1,g)$ and $S^{M_2}$ is scalar curvature of
 $(M_2,h)$\cite{oneil:book}.

We use the Einstein convention, that is, repeated indices with one upper index and one lower index denote summation over their
range. If not stated otherwise, throughout the paper we use the following ranges for indices:  $ i, j, k, ...\in \{1,...,m\}$;
$\alpha, \beta, ...\in \{m+1,...,m+n\}$;
  $a,b,c,... \in \{1,...,n+m\}$. In what follows we take $( x^i,x^\alpha)$ as a coordinate system on $M_1\times M_2$, where
  $(x^i)$  and
$(x^\alpha)$ are the local coordinates on $ M_1$ and $ M_2$,
respectively.


Suppose that $\bar{g}$ is a pseudo-Riemannian
metric on $\bar{M}$
defined by $ \bar{g} =g\times_f h$
  given by its local components:
 \begin{align}
 \bar{g}_{ij}(x^a)&=g_{ij}(x^k),\\
 \bar{g}_{i\alpha}(x^a)&=0,\\
   \bar{g}_{\alpha\beta}(x^a)&= f^2(x^k)h_{\alpha\beta}(x^\mu), \label{eq:metric2}
 \end{align}
 where $g_{ij}(x^k)$ are the local components of $g$ and $h_{\alpha\beta}(x^\mu)$ are the local components of $h$.
Let $\bar{M}=M_1\times_f M_2$ be a warped product manifold.
Then
\begin{align}
\bar{R}_{ij}&=R_{ij}-\frac{n}{f}H^f_{ij},\label{eq:Ricij}\\
\bar{R}_{i\alpha}&=0,\\
\bar{R}_{\alpha\beta}&={R}_{\alpha\beta}-\left(\frac{\Delta
f}{f}+(n-1)\frac{\|\grad f\|^2}{f^2}\right)\bar{g}_{\alpha\beta},\label{eq:Ric2ij}
\end{align}
where $\displaystyle R_{ij}=\Ric^{M_1}(\partial_i,\partial_j)$ are the local components of
 Ricci tensor of $ (M_1,g) $ and $R_{\alpha\beta}=\displaystyle\Ric^{M_2}(\partial_\alpha,\partial_\beta)$
are the local components of Ricci tensor  of $(M_2,h)$.

We denote by $\bar{G}$ the Einstein gravitational tensor field of
$(\bar{M}, \bar{g})$, that is, we have,
\begin{align}
\bar{G} = \bar{\Ric} - \frac{1}{2}\bar{S}\bar{g},\label{eq:Einstein}
\end{align}
where $\bar{\Ric}$ and $\bar{S}$ are Ricci tensor and scaler curvature of $\bar{M}$, respectively.

\begin{pro}

Let $\bar{G}$ be the Einstein gravitational tensor field of
$(\bar{M},\bar{g})$, then we have following equations:\\
\begin{align}
\bar{G}_{ij}&= G_{ij}- \frac{n}{f}H^f_{ij}-
\frac{1}{2}\left(\frac{S^{M_2}}{f^2}- 2n\frac{\Delta
 f}{f}-n(n-1)\frac{\|\grad f\|^2}{f^2}\right)g_{ij},\label{eq:ein1}\\
\bar{G}_{\alpha\beta}&=G_{\alpha\beta}-f^2\left(\frac{\Delta
f}{f}(1-n)+\frac{1}{2}S^{M_1}+
(n-1)(\frac{2-n}{2})\frac{\|\grad f\|^2}{f^2}\right)h_{\alpha\beta},\label{eq:ein2}\\
\bar{G}_{i\alpha}&=0,\label{eq:ein3}
\end{align}
where $G_{ij}$ and $G_{\alpha\beta }$ are the local components of the Einstein
gravitational tensor field  of $(M_1, g)$ and  $(M_2, h)$, respectively.
\end{pro}
\begin{proof}

By using \eqref{eq:Einstein}, \eqref{eq:Ricij},
\eqref{eq:metric2} and \eqref{eq:Scalar}, we obtain\eqref{eq:ein1}, \eqref{eq:ein2} and \eqref{eq:ein3}.
\end{proof}
\section{Einstein field equations with cosmological constant}
Let $(\bar{M},\bar{g})$ be the warped space and $f$ be  the warped function. Suppose that the Einstein gravitational tensor field
$\bar{G}$ of $(\bar{M},\bar{g})$ satisfies the
Einstein equations with cosmological  constant $\bar{\Lambda}$ as
\begin{align}
\bar{G}=-\bar{\Lambda}\bar{g}.\label{6.1}
\end{align}
First, we prove the following theorem.
\begin{theorem}\label{1}
The Einstein equations on $(\bar{M},\bar{g})$ with cosmological constant  $\bar{\Lambda}$ are equivalent with the following
equations
\begin{align}
&-\bar{\Lambda}=\left(\frac{m+n-2}{2m}\right)\left (n\frac{\Delta f}{f}-S^{M_1}\right),\label{115}\\
&G_{\alpha\beta}=f^2(1-\frac{n}{2})\left(\frac{\Delta
f}{f}(1-\frac{n}{m})+\frac{S^{M_1}}{m}+(n-1)\frac{\|\grad
f\|^2}{f^2}\right)h_{\alpha\beta}.\label{116}\end{align}
Moreover, we have
\begin{align}\label{25}
&R_{\alpha\beta}=f^2\left(\frac{\Delta
f}{f}(1-\frac{n}{m})+\frac{S^{M_1}}{m}+(n-1)\frac{\|\grad
f\|^2}{f^2}\right)h_{\alpha\beta}.
\end{align}
\end{theorem}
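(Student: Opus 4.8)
The plan is to substitute the Einstein equation \eqref{6.1} into the componentwise formulas for $\bar{G}$ furnished by the Proposition. In the frame $(x^i,x^\alpha)$ one has $\bar{g}_{ij}=g_{ij}$, $\bar{g}_{\alpha\beta}=f^2h_{\alpha\beta}$ and $\bar{g}_{i\alpha}=0$, so \eqref{eq:ein3} makes the mixed equation hold identically, while \eqref{eq:ein1} and \eqref{eq:ein2} become a ``base'' tensor equation on $(M_1,g)$ and a ``fiber'' tensor equation on $(M_2,h)$. I would write the fiber equation as $G_{\alpha\beta}=f^2\,\Xi\,h_{\alpha\beta}$, where $\Xi$ is the scalar built from $\Delta f/f$, $S^{M_1}$, $\|\grad f\|^2/f^2$ and $\bar{\Lambda}$ read off from \eqref{eq:ein2}; equation \eqref{116} is exactly this fiber equation after $\bar{\Lambda}$ has been eliminated, so the substantive task is to produce \eqref{115}.

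To obtain \eqref{115} I would take two traces. Contracting the base equation \eqref{eq:ein1} with $g^{ij}$ (using that the trace of the base Einstein tensor is $(1-\tfrac m2)S^{M_1}$ and that $g^{ij}H^f_{ij}=\Delta f$) gives one scalar relation still carrying the fiber scalar curvature $S^{M_2}$; contracting the fiber equation \eqref{eq:ein2} with $h^{\alpha\beta}$ (using that the trace of the fiber Einstein tensor is $(1-\tfrac n2)S^{M_2}$) gives a second scalar relation, also carrying $S^{M_2}$. Solving each for $S^{M_2}/f^2$ and equating the two expressions eliminates the fiber scalar curvature. The crucial feature of this computation is that after the elimination the $\|\grad f\|^2/f^2$ contributions cancel identically, and the $S^{M_1}$ and $\Delta f/f$ terms combine through the common factor $(m+n-2)$ to yield precisely $-\bar{\Lambda}=\frac{m+n-2}{2m}\!\left(n\frac{\Delta f}{f}-S^{M_1}\right)$, that is \eqref{115}.

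With \eqref{115} in hand the remaining identities follow by back-substitution. Inserting the value of $\bar{\Lambda}$ into $G_{\alpha\beta}=f^2\,\Xi\,h_{\alpha\beta}$ and collecting coefficients reproduces the factor $f^2(1-\tfrac n2)$ times the bracket in \eqref{116}: the $S^{M_1}$ coefficient collapses to $\tfrac1m(1-\tfrac n2)$, the $\Delta f/f$ coefficient to $(1-\tfrac n2)(1-\tfrac nm)$, and the $\|\grad f\|^2/f^2$ coefficient to $(1-\tfrac n2)(n-1)$, which is \eqref{116}. For \eqref{25} I would re-use the traced base equation to express $S^{M_2}/f^2$ explicitly (it equals $n$ times the bracket of \eqref{25}) and insert it, together with \eqref{116}, into $R_{\alpha\beta}=G_{\alpha\beta}+\tfrac12 S^{M_2}h_{\alpha\beta}$; the prefactor $(1-\tfrac n2)$ coming from $G_{\alpha\beta}$ and the $\tfrac n2$ coming from the $S^{M_2}$ term add to $1$, leaving $R_{\alpha\beta}$ equal to the bracket without the $(1-\tfrac n2)$ prefactor, i.e. \eqref{25}.

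I expect the main obstacle to be the elimination of $S^{M_2}$ and the attendant bookkeeping: one must verify that the gradient terms genuinely cancel and that the numerators factor through $(m+n-2)$, and one should treat the degenerate value $n=2$ separately, since there the fiber trace no longer involves $S^{M_2}$ and \eqref{115} must instead be read off directly from the fiber trace and checked against the general formula. A secondary point, needed for the word \emph{equivalent}, is to confirm that the algebraic steps reverse, so that \eqref{115} and \eqref{116} return \eqref{eq:ein2} in full and the trace of \eqref{eq:ein1}; since \eqref{115}--\eqref{116} only encode the trace of the base equation, the trace-free part of \eqref{eq:ein1} is retained through $G_{ij}$ and should be acknowledged as carried along.
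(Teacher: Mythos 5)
Your proposal is correct and follows essentially the same route as the paper: substitute the Proposition's component formulas into $\bar{G}=-\bar{\Lambda}\bar{g}$, trace the base and fiber equations to eliminate $S^{M_2}$ and solve for $\bar{\Lambda}$, then back-substitute to get \eqref{116} and \eqref{25}. The only difference is bookkeeping order (the paper eliminates $\bar{\Lambda}$ first and then traces the fiber equation, which incidentally avoids your $n=2$ degeneracy), and your remarks on the $n=2$ case and on what ``equivalent'' really covers are more careful than the paper's own proof, which does not address the converse.
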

\begin{proof}
Using \eqref{eq:ein1} and  Einstein equation $\bar{G}=-\bar{\Lambda}\bar{g}$, we have
\begin{align}
G_{ij}- \frac{n}{f}H^f_{ij}-
\frac{1}{2}g_{ij}\left(\frac{S^{M_2}}{f^2}- 2n\frac{\Delta
 f}{f}-n(n-1)\frac{\|\grad f\|^2}{f^2}\right)=-\bar{\Lambda}g_{ij}.\label{13}
\end{align}
Contracting \eqref{13} with $g^{ij},$ and  consider $G_{ij}g^{ij}=S^{M_1}-\frac{m}{2}S^{M_1}$ and $H^f_{ij}g^{ij}=\Delta f,$
we obtain
\begin{align}\bar{\Lambda}=-\left(S^{M_1}(\frac{1}{m}-\frac{1}{2})+n(1-\frac{1}{m})\frac{\Delta
f}{f}-\frac{1}{2}\frac{S^{M_2}}{f^2}+\frac{n}{2}(n-1)\frac{\|\grad
f\|^2}{f^2}\right).\label{14}\end{align}
Then  using \eqref{eq:ein2}, we obtain
\begin{align}
G_{\alpha\beta}=f^2h_{\alpha\beta}\left((1-n)\frac{\Delta f}{f}
+(n-1)(1-\frac{n}{2})\frac{\|\grad
f\|^2}{f^2}+\frac{1}{2}S^{M_1}-\bar{\Lambda}\right).\label{152}
\end{align}
By putting  $\bar{\Lambda}$ from \eqref{14} in \eqref{152}, we  have
\begin{align}
G_{\alpha\beta}=f^2h_{\alpha\beta}\left((1-\frac{n}{m})\frac{\Delta f}{f}+(n-1)\frac{\|\grad
f\|^2}{f^2}+\frac{S^{M_1}}{m}-\frac{1}{2}\frac{S^{M_2}}{f^2}\right).\label{15}
\end{align}
Now, contracting \eqref{15} with $ h^{\alpha\beta}$ we obtain
\begin{align}
\frac{S^{M_2}}{f^2}=n(1-\frac{n}{m})\frac{\Delta f}{f}+n(n-1)\frac{\|\grad
f\|^2}{f^2}+n\frac{S^{M_1}}{m}.\label{16}
\end{align}
By using \eqref{14} and \eqref{16},   we obtain
\begin{align}
-\bar{\Lambda}=\left(\frac{m+n-2}{2m}\right)\left (n\frac{\Delta f}{f}-S^{M_1}\right).
\end{align}
We now use \eqref{16} in \eqref{15} and obtain
\begin{align}
G_{\alpha\beta}=f^2h_{\alpha\beta}(1-\frac{n}{2})\left((1-\frac{n}{m})\frac{\Delta f}{f}+(n-1)\frac{\|\grad
f\|^2}{f^2}+\frac{S^{M_1}}{m}\right).
\end{align}
Now, from the above and by using \eqref{16} and \eqref{eq:Einstein}, we obtained \eqref{25}.
\end{proof}

\begin{pro}
The Einstein equations $\bar{G}=-\bar{\Lambda}\bar{g}$ on
$(\bar{M},\bar{g})$ with cosmological constant $\bar{\Lambda}$
induces the Einstein equations $G_{\alpha\beta} = -\Lambda
h_{\alpha\beta}$ on $(M_2, h_{\alpha\beta})$, where the cosmological
constant $\Lambda$ is given by
\begin{align}\label{24}
\Lambda=-f^2(1-\frac{n}{2})\left(\frac{\Delta
f}{f}(1-\frac{n}{m})+\frac{S^{M_1}}{m}+(n-1)\frac{\|\grad
f\|^2}{f^2}\right).\end{align}
\end{pro}
\begin{proof}
By using  \eqref{116}  and Einstein equations $G_{\alpha\beta} =
-\Lambda h_{\alpha\beta}$, we can  obtain \eqref{24}. Also, by using \eqref{25} and Theorem 3.3 of \cite[page 38]{yano:structures.on.manifolds},
\begin{align}
f^2\left(\frac{\Delta
f}{f}(1-\frac{n}{m})+\frac{S^{M_1}}{m}+(n-1)\frac{\|\grad
f\|^2}{f^2}\right)
\end{align}
must be a constant, and therefore $(M_2,h)$ is an Einstein manifold.
\end{proof}
Now using  \eqref{115}  and Lemma \ref{lemma:hopf1} and \ref{lemma:hopf2}, we deduce the following corollaries.
\begin{cor}
The warping function $f$ is an eigenfunction of the Laplacian operator $\Delta$ with
eigenvalue $\dfrac{2m\bar{\Lambda}+(m+n-2)S^{M_1}}{n(m+n-2)}.$
\end{cor}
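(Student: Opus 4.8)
The plan is to read the eigenvalue relation $\Delta f=\lambda f$ straight off the scalar identity \eqref{115}, since that identity already expresses $\Delta f/f$ algebraically in terms of $\bar{\Lambda}$ and $S^{M_1}$. The essential geometric input is that $\bar{\Lambda}$ is a genuine constant, being the cosmological constant of $(\bar{M},\bar{g})$; this is what lets us pass from a mere second-order relation for $f$ to an honest eigenfunction statement. So the strategy is almost entirely algebraic: isolate the Laplacian term in \eqref{115}, clear the factor $f$, and identify the resulting coefficient as the eigenvalue.

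Concretely, I would first multiply \eqref{115} through by $\tfrac{2m}{m+n-2}$ and rearrange, which isolates the Laplacian term as
\[
n\frac{\Delta f}{f}=S^{M_1}-\frac{2m\bar{\Lambda}}{m+n-2}.
\]
Multiplying by $f/n$ and placing the two terms on the right over the common denominator $n(m+n-2)$ then recasts this as an equation of the form $\Delta f=\lambda f$, from which the eigenvalue recorded in the statement is read off directly. I expect this mechanical part to take only a couple of lines.

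The step I expect to require the most care is justifying that the coefficient $\lambda$ is a genuine constant, so that ``eigenfunction'' is used in its proper sense. The term $\bar{\Lambda}$ is constant by hypothesis, but $S^{M_1}$ is a priori only a smooth function on $M_1$, so to obtain a scalar eigenvalue one needs the scalar curvature of the base to be constant. Here the identity \eqref{115} is itself the lever, since it forces the combination $n\Delta f/f-S^{M_1}$ to equal the constant $-\tfrac{2m\bar{\Lambda}}{m+n-2}$; this rigidity couples $\Delta f$ and $S^{M_1}$ so that they cannot vary independently, and on a compact base the harmonic and Hopf-type arguments of Lemmas \ref{lemma:hopf1} and \ref{lemma:hopf2} are the natural tools for ruling out the remaining non-constant behaviour. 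Once constancy of $\lambda$ is secured, the eigenfunction conclusion is immediate from the rearrangement above.
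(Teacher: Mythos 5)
Your strategy is exactly the paper's: the corollary is presented as an immediate rearrangement of \eqref{115} (the paper offers no more proof than the phrase ``using \eqref{115}\dots we deduce''), and your first two steps reproduce that one-line derivation. Two points need attention, however. First, finish your own algebra: from $n\,\Delta f/f=S^{M_1}-\tfrac{2m\bar{\Lambda}}{m+n-2}$ you obtain
$\Delta f=\tfrac{(m+n-2)S^{M_1}-2m\bar{\Lambda}}{n(m+n-2)}\,f$, whose $\bar{\Lambda}$-term carries the \emph{opposite} sign to the eigenvalue displayed in the corollary. So the stated eigenvalue is not ``read off directly'' from your (correct) rearrangement; either the corollary or \eqref{115} contains a sign slip, and your write-up should record the eigenvalue your computation actually produces rather than assert agreement.

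Second, your instinct that one must justify constancy of the eigenvalue is sound --- the paper silently ignores the fact that $S^{M_1}$ is a priori only a function on $M_1$ --- but the remedy you sketch does not work. Lemmas \ref{lemma:hopf1} and \ref{lemma:hopf2} conclude that $f$ \emph{itself} is constant under a harmonicity or one-sided-sign hypothesis on $\Delta f$ on a compact manifold; they say nothing about $S^{M_1}$, and the paper invokes them only for the two corollaries that follow this one (compact Ricci-flat base $\Rightarrow f$ constant; $\bar{\Lambda}=0$ and Ricci-flat base $\Rightarrow f$ harmonic). The identity \eqref{115} by itself only ties $S^{M_1}$ to $n\,\Delta f/f$ up to the constant $-2m\bar{\Lambda}/(m+n-2)$; it does not force either quantity to be constant separately. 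So either add the hypothesis that $(M_1,g)$ has constant scalar curvature (automatic when $m=1$ or when $(M_1,g)$ is Einstein with $m\geq 3$), or state the conclusion as the pointwise relation $\Delta f=\lambda f$ with $\lambda$ the displayed expression, constant exactly when $S^{M_1}$ is.
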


\begin{cor}
 Let $\bar{M}=M_1\times_f M_2$ and satisfy an Einstein equation with a cosmological constant $\bar{\Lambda}$. If $M_1$ is a compact Riemannian manifold
 and Ricci flat  then $f$ is constant.
\end{cor}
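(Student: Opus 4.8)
The plan is to reduce the assertion to a sign condition on $\Delta f$ over the compact base $M_1$ and then apply Hopf's lemma (Lemma \ref{lemma:hopf2}).

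First I would use that Ricci-flatness of $M_1$ forces its scalar curvature to vanish, $S^{M_1}=0$, since the scalar curvature is the metric trace of the Ricci tensor. Substituting $S^{M_1}=0$ into the first relation \eqref{115} of Theorem \ref{1} collapses it to
\[
-\bar{\Lambda}=\frac{m+n-2}{2m}\,n\,\frac{\Delta f}{f},
\]
so that $\Delta f=\lambda f$ with $\lambda=-\dfrac{2m\bar{\Lambda}}{n(m+n-2)}$ a fixed real constant. This is exactly the eigenvalue statement of the preceding corollary specialized to $S^{M_1}=0$.

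The decisive step is to exploit the positivity of the warping function. Because $f>0$ everywhere on $M_1$ and $\lambda$ is a single constant, the product $\lambda f$ has one fixed sign throughout $M_1$: if $\lambda\ge 0$ then $\Delta f=\lambda f\ge 0$ everywhere, while if $\lambda\le 0$ then $\Delta f\le 0$ everywhere. In either case the hypotheses of Hopf's lemma are satisfied on the compact Riemannian manifold $M_1$, and I conclude that $f$ is constant. As a consistency check, constancy of $f$ gives $\Delta f=0$, whence $\lambda f=0$ and therefore $\lambda=0$, i.e. $\bar{\Lambda}=0$.

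I do not expect a serious obstacle. The one point requiring care is recognizing that $\lambda$ is genuinely a constant rather than a function on $M_1$, so that the sign of $\Delta f$ is governed globally by the sign of $\lambda$ together with $f>0$; this uniform sign is precisely what makes Hopf's lemma applicable, and it is what the compactness and positivity hypotheses are designed to supply.
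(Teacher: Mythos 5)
Your proposal is correct and follows exactly the route the paper intends: the paper explicitly states that these corollaries are deduced from \eqref{115} together with Lemmas \ref{lemma:hopf1} and \ref{lemma:hopf2}, and your argument — Ricci-flatness gives $S^{M_1}=0$, so \eqref{115} yields $\Delta f=\lambda f$ with $\lambda$ constant, whence $\Delta f$ has a uniform sign because $f>0$ and Hopf's lemma applies — is precisely that deduction. The only implicit assumption, $m+n\neq 2$, is shared with the paper's preceding corollary and is not a genuine gap.
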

\begin{cor}
 Let $\bar{M}=M_1\times_f M_2$ which satisfy an Einstein equation with a cosmological constant $\bar{\Lambda}=0$. If $M_1$ is   Ricci flat then $f$ is harmonic.
\end{cor}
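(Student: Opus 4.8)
The plan is to specialize the master relation \eqref{115} of Theorem~\ref{1} to the hypotheses at hand. Since the warped space satisfies the Einstein equation with vanishing cosmological constant, I would substitute $\bar{\Lambda}=0$ into \eqref{115}, obtaining
\begin{align*}
0=\left(\frac{m+n-2}{2m}\right)\left(n\frac{\Delta f}{f}-S^{M_1}\right).
\end{align*}
Because the total dimension satisfies $m+n\geq 3$ in every case of interest, the scalar coefficient $\frac{m+n-2}{2m}$ is nonzero, so the bracketed factor must vanish; that is, $n\,\Delta f/f=S^{M_1}$.

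Next I would invoke the Ricci-flatness assumption on the base. If $(M_1,g)$ is Ricci flat then $\Ric^{M_1}=0$, and taking its trace gives $S^{M_1}=\tr(\Ric^{M_1})=0$. Substituting this back into the identity above leaves $n\,\Delta f/f=0$. Since $f$ is a positive smooth function, being a warping function, and $n\geq 1$, the factor $n/f$ is nonzero everywhere, so one may divide by it to conclude $\Delta f=0$. Hence $f$ is harmonic, which is precisely the claim.

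I do not anticipate any genuine obstacle: the corollary is a direct evaluation of \eqref{115} once $\bar{\Lambda}=0$ and $S^{M_1}=0$ are imposed. The only two points deserving a word of care are the nonvanishing of the coefficient $\frac{m+n-2}{2m}$, which holds away from the degenerate total dimension $m+n=2$, and the positivity of $f$, which legitimizes the final division and is built into the definition of a warped product. Both are harmless, so the argument reduces to the two substitutions sketched above.
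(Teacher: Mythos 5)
Your argument is correct and is exactly the paper's intended derivation: the corollary is stated as a direct consequence of \eqref{115}, and substituting $\bar{\Lambda}=0$ and $S^{M_1}=0$ there forces $\Delta f=0$ just as you describe. Your remarks on the nonvanishing of $\frac{m+n-2}{2m}$ and the positivity of $f$ are the right (and only) points of care.
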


\section{Generalized Robertson-Walker spacetimes}
An $(n+1)$-dimensional generalized Robertson-Walker (GRW) spacetime with $n>1$ is a
Lorentzian manifold which is a warped product manifold $\bar{M}=I\times_f M$ of an open interval
$I$ of the real line $\mathbb{R}$ and a Riemannian $n$-manifold $(M, g)$ endowed with the
Lorentzian metric
\begin{align}
\bar{g}=-\pi^*(dt^2)+f(t)^2\sigma^*(g),
\end{align}
where $\pi$ and $\sigma$ denote the projections onto $I$ and $M$, respectively, and $f$ is a positive
smooth function on $I$. In a classical Robertson-Walker (RW) spacetime, the fiber is
three dimensional and of constant sectional curvature, and the warping function $f$ is
arbitrary\cite{chen:ASimpleCharacteration}.
Such spaces include the Einstein-de Sitter space, the Friedman cosmological models, and the de Sitter space.

 The following formula can be directly obtained from the previous result and noting that
on a generalized Robertson-Walker spacetime $\grad_I f = -f',\Vert \grad_If\Vert^2_I =-f'^2, g(\frac{\D}{\D t},
\frac{\D}{\D t})=-1,H^f(\frac{\D}{\D t}, \frac{\D}{\D t})=f''$
 and $\Delta_I f=-f''.$ We denote the usual derivative
on the real interval I by the prime notation (i.e.,$'$)  from now
on.

 Putting $m=1$ in Theorem~\ref{1} and by
using \eqref{115} and \eqref{116}, we have
\begin{align}
&\bar{\Lambda}=-\frac{1}{8}n(n-1)(B^2+2B'),\label{126}\\
&G_{\alpha\beta}=-\frac{1}{4}(n-1)(n-2)f^2B'h_{\alpha\beta},\label{127}
\end{align}
 where $B=2\frac{f'}{f}.$ Moreover, Einstein Equation \eqref{127} on $(\bar{M},\bar{g})$ with
cosmological constant $\bar{\Lambda}$ induce the Einstein Equation
$G=-\Lambda g$ on $(M,g)$ where the cosmological constant $\Lambda$
is given by
\begin{align}
\Lambda=\frac{1}{4}(n-1)(n-2)f^2B',
\end{align}
and is constant.
Next, we assume that $\bar{\Lambda}$ given by \eqref{126} is a constant. Thus, we must solve the differential equation $-\frac{1}{8}n(n-1)(B^2+2B')=k,$ where $k$ is a constant. We can state the following classification theorem.

\begin{theorem}
Let $(\bar{M},\bar{g})$ be  an $(n+1)$-dimensional generalized Robertson-Walker (GRW) spacetime with $n>1$. We have
\begin{enumerate}
\item The cosmological constant of $(\bar{M},\bar{g})$ is given by
\begin{align}
\bar{\Lambda}=-\frac{n(n-1)}{2L^2}, L\neq0,
\end{align}
and $\bar{g}$ is given by one of the expressions
\begin{align}
ds^2&=\frac{1}{k}e^{2t/L}g_{\alpha\beta}(x^\mu)dx^\alpha dx^\beta-dt^2,\label{6.26a}\\
ds^2&=\frac{1}{k}\left(\cosh\frac{b+t}{L}\right)^2g_{\alpha\beta}(x^\mu)dx^\alpha dx^\beta-dt^2,\label{6.26b}\\
ds^2&=\frac{1}{k}\left(\sinh\frac{b+t}{L}\right)^2g_{\alpha\beta}(x^\mu)dx^\alpha dx^\beta-dt^2, t\neq-b,\label{6.26c}
\end{align}
where $k>0,$ and $b\in\mathbb{R}.$ Moreover, the spacetime $(M,g)$ must be Ricci flat in case of the metric
(\ref{6.26a}), and an Einstein space in cases of both metrics (\ref{6.26b}) and (\ref{6.26c}), with cosmological
constants
\begin{align}
\Lambda=-\frac{(n-1)(n-2)}{2kL^2},
\end{align}
and
\begin{align}
\Lambda=\frac{(n-1)(n-2)}{2kL^2},
\end{align}
respectively.
\item
The cosmological constant of $(\bar{M} ,\bar{g})$ and $\bar{g}$ are given by
\begin{align}
\bar{\Lambda}=\frac{n(n-1)}{2L^2}, L\neq0,
\end{align}
and
\begin{align}\begin{split}
ds^2=\frac{1}{k}\left(\cos\frac{b+t}{L}\right)^2g_{\alpha\beta}(x^\mu)dx^\alpha dx^\beta-dt^2,\label{6.30}\\
t\neq b+L\pi(2h+1)/2,  h\in\mathbb{Z},
\end{split}
\end{align}
where $k > 0,$ and $b\in\mathbb{R}$. In this case, $(M, g)$ is an Einstein space with cosmological constant
\begin{align}
\Lambda=\frac{(n-1)(n-2)}{2kL^2},
\end{align}
\item
The cosmological constant of $(\bar{M} ,\bar{g})$ is $\bar{\Lambda}=0$, that is, $(\bar{M} ,\bar{g})$ is Ricci flat, and
$\bar{g}$ is given by
\begin{align}
ds^2=\frac{1}{L^2}(b-t)^2g_{\alpha\beta}(x^\mu)dx^\alpha
dx^\beta-dt^2, t\neq-b,
\end{align}
where $L\neq0$, and $b\in\mathbb{R}$. Moreover, $(M, g)$ must be an Einstein space with cosmological
constant
\begin{align}
\Lambda=\frac{(n-1)(n-2)}{2L^2}.
\end{align}
\end{enumerate}
\end{theorem}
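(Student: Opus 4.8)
The plan is to convert the requirement that $\bar\Lambda$ in \eqref{126} be constant into a single linear ODE for the warping function $f$, to integrate that ODE according to the sign of $\bar\Lambda$, and then to read off from each explicit solution both the metric $\bar g=-dt^2+f^2g$ and the induced geometry of the fiber $(M,g)$.

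First I would linearize. Since $B=2f'/f$, differentiation yields the identity $2B'+B^2=4f''/f$, so that \eqref{126} collapses to
\begin{align}
\bar\Lambda=-\frac{n(n-1)}{2}\,\frac{f''}{f}.
\end{align}
Imposing that $\bar\Lambda$ be a constant, as \eqref{6.1} demands, is therefore equivalent to the linear second-order equation
\begin{align}
f''=-\frac{2\bar\Lambda}{n(n-1)}\,f .
\end{align}
This is the key step: the Riccati problem for $B$ concealed in \eqref{126} trivializes once it is written in terms of $f$.

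Next I would treat the three sign cases, writing $1/L^2=\lvert 2\bar\Lambda/(n(n-1))\rvert$. When $\bar\Lambda<0$ the equation is $f''=f/L^2$, with general solution $Ae^{t/L}+De^{-t/L}$; this splits into a pure exponential (one coefficient zero), a hyperbolic cosine (coefficients of equal sign), or a hyperbolic sine (coefficients of opposite sign). Folding the constants of integration into the amplitude $1/\sqrt k$, which forces $k>0$ so that $f$ is real and positive, and the phase $b$ displayed in \eqref{6.26a}, \eqref{6.26b}, \eqref{6.26c} reproduces those three metrics; the condition $f>0$ simultaneously rules out the negative cosine branch and yields the listed domain restrictions, the sine being excluded at its zero. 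When $\bar\Lambda>0$ one gets $f''=-f/L^2$, whose general solution is a single shifted cosine, giving \eqref{6.30} with the zeros of the cosine removed. When $\bar\Lambda=0$ the equation is $f''=0$, whose non-constant solution is affine in $t$ and gives the remaining metric.

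Finally, to decide between Ricci flat and Einstein I would note that, with $C=-8\bar\Lambda/(n(n-1))$, the quantity $W=(f')^2-\frac{C}{4}f^2$ satisfies $W'=2f'\!\left(f''-\frac{C}{4}f\right)=0$, hence is constant along every solution, and that $f^2B'=-2W$. Consequently the induced cosmological constant $\Lambda=\frac14(n-1)(n-2)f^2B'$ is constant, so by the preceding proposition $(M,g)$ is an Einstein space, degenerating to a Ricci flat space exactly when $W=0$. Evaluating $W$ on each normal form --- it vanishes for the exponential and is a nonzero constant in the hyperbolic, trigonometric and affine cases --- yields the stated values of $\Lambda$ by direct substitution. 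I expect the only genuine difficulty to lie in this last bookkeeping: sorting the two-parameter solution family into the exponential, hyperbolic and cosine normal forms, matching the integration constants to $(k,b)$, imposing positivity of $f$ to fix the admissible domains, and tracking signs consistently throughout.
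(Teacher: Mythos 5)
Your overall strategy is sound and is essentially the route the paper leaves implicit: the paper reduces the classification to the Riccati equation $-\frac18 n(n-1)(B^2+2B')=k$ for $B=2f'/f$ and then simply states the theorem, whereas you observe the identity $B^2+2B'=4f''/f$ and solve the equivalent \emph{linear} equation $f''=-\frac{2\bar\Lambda}{n(n-1)}f$ directly. That is a genuine simplification: the three-way split of $Ae^{t/L}+De^{-t/L}$ into exponential, $\cosh$ and $\sinh$ normal forms, the single cosine branch for the opposite sign of $\bar\Lambda$, the affine solution for $\bar\Lambda=0$, and the positivity/domain restrictions all come out correctly and reproduce the metrics \eqref{6.26a}--\eqref{6.26c}, \eqref{6.30} and the linear case. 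The first integral $W=(f')^2-\frac{C}{4}f^2$ with $f^2B'=-2W$ is also a clean way to see that the induced $\Lambda$ is constant on each solution and that the exponential branch is exactly the Ricci-flat one.

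The gap is in the final step, where you assert that evaluating $W$ ``yields the stated values of $\Lambda$ by direct substitution'' without performing the substitution. Carrying it out exposes a sign clash with the theorem as printed. From \eqref{127} one has $\Lambda=\frac14(n-1)(n-2)f^2B'=\frac{(n-1)(n-2)}{2}\bigl(ff''-(f')^2\bigr)=-\frac{(n-1)(n-2)}{2}W$. For $f=\frac{1}{\sqrt k}\cosh\frac{b+t}{L}$ this gives $ff''-(f')^2=\frac{1}{kL^2}>0$, hence $\Lambda=+\frac{(n-1)(n-2)}{2kL^2}$, while the theorem assigns the negative value to the $\cosh$ metric (and vice versa for $\sinh$); the same computation also negates the stated $\Lambda$ in cases 2 and 3. (A sanity check: the $\cosh$ fiber of global de Sitter is a round sphere, whose constant in the convention $G=-\Lambda h$ is positive.) There is a related inconsistency upstream: your linearized formula $\bar\Lambda=-\frac{n(n-1)}{2}\frac{f''}{f}$ follows faithfully from \eqref{126}, but \eqref{115} with $m=1$ and $\Delta_I f=-f''$ gives the opposite sign, and only one of the two can be compatible with the theorem's assignment of the exponential/hyperbolic family to $\bar\Lambda<0$. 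Your argument would be complete, and more reliable than the paper's, if you carried out these substitutions explicitly and resolved which signs in \eqref{115}, \eqref{126}, \eqref{127} and the theorem are mutually consistent, rather than asserting agreement with the statement as given.
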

Putting $n=4$, indicating the five  dimensional spacetime, we obtain
\begin{align}
\Lambda=\frac{3}{L^2}.
\end{align}
This is in agreement with the current observed value of $4D$
cosmological constant provided that $L$ be the current size of the
universe, namely $10^{28}$ meters. Such a good agreement between the
theoretical value of cosmological constant obtained in this higher
dimensional model of warped spaces and the experimental value of
cosmological constant may account for the possible higher
dimensional origin of the cosmological constant.
\section{Conclusion}
 In this article, using the formalism of Bejancu {\it et al}, we have
studied Einstein's equation in $(m+n)D$  and $(1+n)D$ warped spaces
$(\bar{M},\bar{g})$ where $\bar{M}=M_1\times_f M_2$ is the product
manifold $M_1\times M_2$ endowed with the pseudo-Riemannian metric
$\bar{g} = \pi^*g + (f\circ\pi)^2\sigma^*h$. We have classified all such
spaces satisfying Einstein equations
$\bar{G}=-\bar{\Lambda}\bar{g}$. We have shown that the warping
function $f$ can determine both the cosmological constants
$\bar{\Lambda}$, and ${\Lambda}$ appearing in the induced Einstein
equations ${G}=-{\Lambda}{h}$ on $(M_2, h)$. Moreover, we have
discussed on the origin of the $4D$ cosmological constant as an
emergent effect of higher dimensional warped spaces and confronted
its numerical value with its observed value.


\begin{thebibliography}{10}

\bibitem{Beem.Ehrlich:GlobalLorentzianGeometry}
{\sc J.~K. Beem and P.~E. Ehrlich}, {\em Global {L}orentzian geometry}, vol.~67
  of Monographs and Textbooks in Pure and Applied Math., Marcel Dekker, Inc.,
  New York, 1981.

\bibitem{Beem.Ehrlich.Easley:GlobalLorentzianGeometry}
{\sc J.~K. Beem, P.~E. Ehrlich, and K.~L. Easley}, {\em Global {L}orentzian
  geometry}, vol.~202 of Monographs and Textbooks in Pure and Applied
  Mathematics, Marcel Dekker, Inc., New York, second~ed., 1996.

\bibitem{Bejancu:Classificationof5dwarpedspaceswithcosmologicalconstant}
{\sc A.~Bejancu, C.~C{\u{a}}lin, and H.~R. Farran}, {\em Classification of 5d
  warped spaces with cosmological constant}, J. Math. Phys., 53 (2012),
  pp.~122503, 11.

\bibitem{bishop.oneill:}
{\sc R.~L. Bishop and B.~O'Neill}, {\em Manifolds of negative curvature},
  Trans. Amer. Math. Soc., 145 (1969), pp.~1--49.

\bibitem{chen:2011pseudo}
{\sc B.-Y. Chen}, {\em Pseudo-Riemannian Geometry, $\delta$-Invariants and
  Applications}, World Scientific Publishing Company, 2011.

\bibitem{chen:ASimpleCharacteration}
\leavevmode\vrule height 2pt depth -1.6pt width 23pt, {\em A simple
  characterization of generalized {Robertson-Walker} spacetimes}, General
  Relativity and Gravitation, 46 (2014).

\bibitem{Darabi:AquantumCosmologyAndDiscontinuous}
{\sc F.~Darabi and A.~Rastkar}, {\em A quantum cosmology and discontinuous
  signature changing classical solutions}, Gen. Relativity Gravitation, 38
  (2006), pp.~1355--1366.

\bibitem{Einstein}
{\sc A.~Einstein}, {\em Cosmological considerations in the general theory of
  relativity}, Sitzungsber. Preuss. Akad. Wiss. Berlin (Math. Phys.),  (1917),
  pp.~142--152.

\bibitem{oneil:book}
{\sc B.~O'Neill}, {\em Semi-{R}iemannian geometry}, vol.~103 of Pure and
  Applied Mathematics, Academic Press Inc. [Harcourt Brace Jovanovich
  Publishers], New York, 1983.
\newblock With applications to relativity.

\bibitem{Randall.Sundrum:Analternative}
{\sc L.~Randall and R.~Sundrum}, {\em An alternative to compactification},
  Phys. Rev. Lett., 83 (1999), pp.~4690--4693.

\bibitem{Randall.Sundrum:Largemasshierarchy}
\leavevmode\vrule height 2pt depth -1.6pt width 23pt, {\em Large mass hierarchy
  from a small extra dimension}, Phys. Rev. Lett., 83 (1999), pp.~3370--3373.

\bibitem{CCP}
{\sc S.~Weinberg}, {\em The cosmological constant problem}, Rev. Modern Phys.,
  61 (1989), pp.~1--23.

\bibitem{yano:structures.on.manifolds}
{\sc K.~Yano and M.~Kon}, {\em Structures on manifolds}, vol.~3 of Series in
  Pure Mathematics, World Scientific Publishing Co., Singapore, 1984.

\end{thebibliography}
\end{document}